\newtheorem{theorem}{Theorem}[section]
\newtheorem{lemma}[theorem]{Lemma}
\newtheorem{corollary}[theorem]{Corollary}
\theoremstyle{definition}
\newtheorem{definition}[theorem]{Definition}
\newtheorem{example}[theorem]{Example}
\theoremstyle{remark}
\newtheorem{remark}[theorem]{Remark}
\begin{document}

\title[Trapezoidal Type Inequalities Related to $h$-Convex Functions]{Trapezoidal Type Inequalities Related to $h$-Convex Functions with applications}

\author[M. Rostamian Delavar]{M. Rostamian Delavar}
\address{Department of Mathematics, Faculty of Basic Sciences, University of Bojnord, P. O. Box 1339, Bojnord 94531, Iran}
\email{\textcolor[rgb]{0.00,0.00,0.84}{m.rostamian@ub.ac.ir}}

\author[S. S. Dragomir]{S. S. Dragomir}
\address{Mathematics, College of Engineering \& Science, Victoria
University, PO Box 14428, Melbourne City, MC 8001, Australia}
\email{\textcolor[rgb]{0.00,0.00,0.84}{sever.dragomir@vu.edu.au}}

\subjclass[2010]{Primary 26A51, 26D15, 52A01 Secondary  26A51 }

\keywords{$h$-Convex function, Fej\'{e}r inequality, Random variable, Trapezoid formula.}

\begin{abstract}
A mapping $M(t)$ is considered to obtain some preliminary results and a new trapezoidal form of Fej\'er inequality related to the $h$-convex functions. Furthermore the obtained results are applied to achieve some new inequalities in connection with special means, random variable and trapezoidal formula.
\end{abstract}

\maketitle

\section{Introduction}
\, In 1906, the Hungarian mathematician L. Fej\'er \cite{Fejer} proved the following integral inequalities known in the literature as Fej\'er inequality:
\begin{align}
f\Big(\frac{a + b}{2}\Big)\int_a^b g(x)dx\leq \int_a^bf(x)g(x)dx\leq \frac{f(a) + f(b)}{2}\int_a^bg(x)dx, \label{hhf}
\end{align}
where $f:[a,b]\to\mathbb{R}$ is convex and  $g : [a, b]\to \mathbb{R^+}=[0,+\infty)$ is integrable and symmetric to $x=\frac{a+b}{2}\big(g(x)=g(a+b-x), \forall x\in [a,b]\big)$.
For some other inequalities in connection with Fej\'er inequality see \cite{mich, park, rode, RD, sarik, tseng} and references therein.

In 2006, the concept of $h$-convex functions related to the nonnegative real functions has been introduced in \cite{varosanec} by S. Varo\v{s}anec. The class of $h$-convex functions is including a large class of nonnegative functions such as nonnegative convex functions, Godunova-Levin functions \cite{GL}, s-convex functions in the second sense \cite{brec} and P-functions \cite{drag}.
\begin{definition}\cite{varosanec}\label{var}
Let $h: [0,1] \to \mathbb{R^+}$ be a function such that $h\not\equiv 0$. We say that $f : I \to \mathbb{R^+}$ is a h-convex
function, if for all $x, y \in I$ , $\lambda\in [0, 1]$ we have
\begin{align}
f\big(\lambda x +(1-\lambda)y\big)\leq h(\lambda)f (x) +h(1-\lambda)f (y). \label{HC}
\end{align}
\end{definition}
Obviously, if  $h(t) = t$, then all non-negative convex functions belong to the class of $h$-convex functions. Also if we take $h(t) =\frac{1}{t}$ , $h(t) = t^s, s\in(0,1]$, and $h(t) = 1$ in (\ref{HC}) respectively, then Definition \ref{var} reduces to definitions Godunova-Levin functions, s-convex functions and P-functions respectively.
\noindent To see Fej\'er inequality related to $h$-convex functions we refer the readers to \cite{varosanec2}.\\
\indent By the Fej\' er trapezoidal inequality we mean the estimation of difference for right-middle part of (\ref{hhf}). The Fej\' er trapezoidal inequality related to convex functions has been obtained in \cite{hwang} as the following:

\begin{theorem}\label{thm-1} Let $f : I\subseteq \mathbb{R}\to \mathbb{R}$ be differentiable mapping on $I^{\circ}$, where $a, b\in I$ with $a < b$, and let $g : [a, b]\to [0,\infty)$ be continuous positive mapping and symmetric to $\frac{a+b}{2}$. If the mapping $|f '|$ is convex on $[a, b]$, then the following inequality holds:
\begin{align}\label{eq.000}
&\bigg{|}\frac{f(a)+f(b)}{2}\int_a^bg(x)dx-\int_a^b f(x)g(x)dx\bigg{|}\\\leq
&\frac{(b-a)}{4}\Big{[}\big|f'(a)\big|+\big{|}f'(b)\big{|}\Big{]}\int_0^1\int_{\frac{1+t}{2}a+\frac{1-t}{2}b}^{\frac{1-t}{2}a+\frac{1+t}{2}b}g(x)dxdt.\notag
\end{align}
\end{theorem}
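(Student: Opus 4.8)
The plan is to convert the left-hand side into a single integral of $f'$ against a symmetric kernel, and then to exploit both the symmetry of $g$ and the convexity of $|f'|$ to produce the stated bound.

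First I would establish a weighted trapezoidal identity by integration by parts. Put $G(x)=\int_a^x g(s)\,ds$ and
\[
K(x)=G(x)-\frac12\int_a^b g(s)\,ds .
\]
Integrating $\int_a^b f(x)g(x)\,dx=\int_a^b f(x)G'(x)\,dx$ by parts and using $\int_a^b f'(x)\,dx=f(b)-f(a)$, one obtains
\[
\frac{f(a)+f(b)}{2}\int_a^b g(x)\,dx-\int_a^b f(x)g(x)\,dx=\int_a^b f'(x)K(x)\,dx .
\]
Taking absolute values and applying the triangle inequality for integrals reduces the problem to estimating $\int_a^b |f'(x)|\,|K(x)|\,dx$.

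Next I would record the symmetry of $K$. Since $g(x)=g(a+b-x)$, a change of variable shows $K(a+b-x)=-K(x)$, so $|K|$ is symmetric about $\frac{a+b}{2}$; moreover $G\big(\frac{a+b}{2}\big)=\frac12\int_a^b g$, so $K$ vanishes at the midpoint and is nonpositive on $[a,\frac{a+b}{2}]$, giving $|K(x)|=\int_x^{(a+b)/2} g(s)\,ds$ there. Folding the integral onto the left half via the substitution $x\mapsto a+b-x$ yields
\[
\int_a^b |f'(x)|\,|K(x)|\,dx=\int_a^{(a+b)/2}\big(|f'(x)|+|f'(a+b-x)|\big)\,|K(x)|\,dx .
\]
Writing $x=\lambda a+(1-\lambda)b$ with $\lambda\in[\tfrac12,1]$, so that $a+b-x=(1-\lambda)a+\lambda b$, the convexity of $|f'|$ (the case $h(t)=t$ of Definition \ref{var}) gives $|f'(x)|+|f'(a+b-x)|\le |f'(a)|+|f'(b)|$, and hence
\[
\Big|\tfrac{f(a)+f(b)}{2}\int_a^b g-\int_a^b fg\Big|\le \big(|f'(a)|+|f'(b)|\big)\int_a^{(a+b)/2}|K(x)|\,dx .
\]

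Finally, I would match $\int_a^{(a+b)/2}|K(x)|\,dx$ to the double integral on the right-hand side. Interchanging the order of integration gives $\int_a^{(a+b)/2}|K(x)|\,dx=\int_a^{(a+b)/2} g(s)(s-a)\,ds$, and a direct computation (swapping the $t$- and $x$-integrations and using the symmetry of $g$) shows this equals $\frac{b-a}{4}\int_0^1\int_{\frac{1+t}{2}a+\frac{1-t}{2}b}^{\frac{1-t}{2}a+\frac{1+t}{2}b} g(x)\,dx\,dt$, since the inner interval is exactly $[\frac{a+b}{2}-\frac{t}{2}(b-a),\,\frac{a+b}{2}+\frac{t}{2}(b-a)]$. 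I expect the main obstacle to be this last bookkeeping: choosing the constant in $K$ so that the boundary terms collapse precisely to $\frac{f(a)+f(b)}{2}\int_a^b g$, and then verifying that the Fubini/symmetry rearrangement reproduces the double-integral factor together with the constant $\frac{b-a}{4}$. The convexity and symmetry steps are routine once the identity is in place.
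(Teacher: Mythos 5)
Your proof is correct, but note that the paper itself never proves Theorem \ref{thm-1}: it is imported verbatim from \cite{hwang}, so the only in-paper comparison is with the $M(t)$ machinery of Section 2. Your kernel is in fact the same object in disguise: with $x=ta+(1-t)b$ one has $K(x)=G(x)-\tfrac12\int_a^b g=\tfrac{b-a}{2}M(t)$, and your integration-by-parts identity is exactly Lemma \ref{lem0}(v), while $K(a+b-x)=-K(x)$ is Lemma \ref{lem0}(ii). Where you genuinely diverge is in how the convexity is used: the paper's Theorem \ref{thm0} keeps $h(t)|f'(a)|+h(1-t)|f'(b)|$ inside the double integral and then swaps the order of integration, whereas you first fold the integral onto $[a,\tfrac{a+b}{2}]$ and use the pairing $|f'(x)|+|f'(a+b-x)|\le|f'(a)|+|f'(b)|$, which is cleaner in the convex case but does not generalize to arbitrary $h$. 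Your closing computation, $\int_a^{(a+b)/2}|K(x)|\,dx=\int_a^{(a+b)/2}g(x)(x-a)\,dx=\tfrac{b-a}{4}\int_0^1\int_{\frac{1+t}{2}a+\frac{1-t}{2}b}^{\frac{1-t}{2}a+\frac{1+t}{2}b}g(x)\,dx\,dt$, checks out and has the useful side effect of showing that the bound in the paper's Corollary with inequality (\ref{ineq04'}) is numerically identical to the right-hand side of (\ref{eq.000}), not merely a ``different face'' of it. All intermediate claims (the sign of $K$ on the left half, $G(\tfrac{a+b}{2})=\tfrac12\int_a^b g$, the Fubini swap) are justified by the stated hypotheses, so the argument is complete.
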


In this paper, motivated by above works and results we consider a mapping $M(t)$ and obtain some introductory properties related to it. Also a new trapezoidal form of Fej\'er inequality is proved in the case that the absolute value of considered function is $h$-convex. Specially  in the convex case, the obtained Fej\' er trapezoidal inequality is different from (\ref{eq.000}) with a new face. Furthermore some applications in connection with special means, random variable and trapezoidal formula are given.
\section{Main Results}

\noindent Related to a function $g:[a,b]\to \mathbb{R}$ consider the mapping $M:[0,1]\to \mathbb{R}$ as the following:
$$M(t)=\int_t^1 g\big(sa+(1-s)b\big)ds-\int_0^t g\big(sa+(1-s)b\big)ds.$$

There exist some properties for the mapping M(t), compiled in the following lemma which are used to obtain our main results.
\begin{lemma}\label{lem0}
Suppose that $I\subseteq \mathbb{R}$ is an interval, $~ a, b \in I^{\circ}$ with
$a < b$ and \\$g:[a,b]\to \mathbb{R}$ is an integrable function on $[a,b]$.\\

\rm{(i)} \textit{If $g$ is symmetric to $\frac{a+b}{2}$, then
\begin{align*}
 &M(t)=\begin{cases}
2\displaystyle\int_{t}^{\frac{1}{2}} g\big(sa+(1-s)b\big)ds &~~ 0\leq t\leq \frac{1}{2};\\ \\
-2\displaystyle\int_{1\over 2}^{t} g\big(sa+(1-s)b\big)ds & ~~\frac{1}{2}\leq t\leq 1.
\end{cases}
\end{align*}}

\rm{(ii)} \textit{ For any $t\in [0,1]$,
\begin{align}
M(t)+M(1-t)=0.\label{eq.01}
\end{align}
 }

\rm{(iii)} \textit{If $g$ is a nonnegative function, then
 \begin{align*}
\begin{cases}
M(t)\geq 0 &~~~~ 0\leq t\leq \frac{1}{2},\\
M(t)\leq 0 &~~~~\frac{1}{2}\leq t\leq 1.
\end{cases}
\end{align*}  }

\rm{(iv)} \textit{The following inequalities hold.
\begin{align*}
\int_{0}^1|M(t)|dt\leq \frac{1}{2}||g||_{\infty},
\end{align*}
and
\begin{align*}
\int_{0}^1|M(t)|dt\leq 2||g||_{q}\int_0^1\big|t-{1 \over 2}\big|^{1\over p}dt.
\end{align*}}

\rm{(v)} \textit{Let $f : I^{\circ}\to\mathbb{R}$ be a differentiable mapping on $I^\circ$, and $g$ be a differentiable nonnegative mapping. If $f'\in L[a, b]$, then the
following equality holds:
\begin{align}\label{eq.01'}
&\frac{1}{b-a}\bigg{(}\frac{f(a)+f(b)}{2}\int_a^bg(x)dx-\int_a^b f(x)g(x)dx\bigg{)}\notag\\
&=\frac{b-a}{2}\int_{0}^1M(t)f'\big(ta+(1-t)b\big)dt.
\end{align}
}
\end{lemma}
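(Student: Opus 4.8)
The plan is to prove the identity by a single integration by parts on the right-hand side, after first rewriting $M$ in a shape that is transparent to differentiate. Directly from the definition I would record that
\[
M(t)=\int_0^1 g\big(sa+(1-s)b\big)\,ds-2\int_0^t g\big(sa+(1-s)b\big)\,ds ,
\]
so that $M$ is a constant minus twice a partial integral. By the fundamental theorem of calculus this yields $M'(t)=-2\,g\big(ta+(1-t)b\big)$ (continuity of $g$ would already suffice, and $g$ is assumed differentiable), together with the boundary values $M(0)=\int_0^1 g(sa+(1-s)b)\,ds$ and $M(1)=-\int_0^1 g(sa+(1-s)b)\,ds$. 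These three facts are the only structural inputs I need.

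Next I would integrate the target integral $\int_0^1 M(t)\,f'\big(ta+(1-t)b\big)\,dt$ by parts, choosing $u=M(t)$ and $dv=f'\big(ta+(1-t)b\big)\,dt$. Since the chain rule gives $\frac{d}{dt}f\big(ta+(1-t)b\big)=(a-b)\,f'\big(ta+(1-t)b\big)$, an antiderivative is $v=-\frac{1}{b-a}f\big(ta+(1-t)b\big)$, while $du=M'(t)\,dt=-2\,g\big(ta+(1-t)b\big)\,dt$. Using the values of $M(0),M(1)$ and $f(1\cdot a+0\cdot b)=f(a)$, $f(0\cdot a+1\cdot b)=f(b)$, the boundary term $[uv]_0^1$ collapses to $\frac{f(a)+f(b)}{b-a}\int_0^1 g(sa+(1-s)b)\,ds$, and the remaining piece $-\int_0^1 v\,du$ becomes $-\frac{2}{b-a}\int_0^1 f\big(ta+(1-t)b\big)g\big(ta+(1-t)b\big)\,dt$.

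To finish, I would convert both $t$-integrals back into integrals in $x$ through the substitution $x=ta+(1-t)b$, for which $dx=(a-b)\,dt$ and the endpoints $t=0,1$ correspond to $x=b,a$. This turns $\int_0^1 g(sa+(1-s)b)\,ds$ into $\frac{1}{b-a}\int_a^b g(x)\,dx$ and $\int_0^1 fg\,dt$ into $\frac{1}{b-a}\int_a^b f(x)g(x)\,dx$. Combining the two contributions shows that $\int_0^1 M(t)f'(ta+(1-t)b)\,dt$ equals $\frac{2}{(b-a)^2}\big(\frac{f(a)+f(b)}{2}\int_a^b g(x)\,dx-\int_a^b f(x)g(x)\,dx\big)$, and multiplying by $\frac{b-a}{2}$ reproduces exactly the claimed left-hand side.

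I expect the only genuine pitfall to be the sign and constant bookkeeping: the substitution $x=ta+(1-t)b$ is orientation-reversing, so each conversion should contribute a factor $\frac{1}{b-a}$ rather than $\frac{1}{a-b}$, and the factor $-\tfrac{1}{b-a}$ inside $v$ together with $M'(t)=-2g$ must be tracked so that the minus signs cancel correctly in the $-\int v\,du$ term. Notably, neither convexity of $f$ nor symmetry of $g$ enters at this stage; only integrability (continuity) of $g$ is used, so part (v) is a purely computational identity underlying the later inequalities.
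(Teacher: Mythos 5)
Your argument addresses only part (v) of the lemma, and for that part it is correct and complete: the rewriting $M(t)=\int_0^1 g\big(sa+(1-s)b\big)\,ds-2\int_0^t g\big(sa+(1-s)b\big)\,ds$, the resulting $M'(t)=-2g\big(ta+(1-t)b\big)$ and boundary values, the integration by parts with $v=-\tfrac{1}{b-a}f\big(ta+(1-t)b\big)$, and the back-substitution $x=ta+(1-t)b$ all check out, and the constants assemble to exactly \eqref{eq.01'}. Your closing observation that neither symmetry of $g$ nor any convexity is needed here is also accurate. Interestingly, the paper does not prove (v) at all --- it simply cites \cite{sarik} --- so on this point you supply more than the paper does, by what is presumably the same integration-by-parts route as the cited source.

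The genuine gap is that the statement you were asked to prove is the entire Lemma~\ref{lem0}, and parts (i)--(iv) are nowhere addressed. These are not corollaries of your computation: (i) requires using the symmetry $g(x)=g(a+b-x)$ to show $\int_{(a+b)/2}^b g(x)\,dx=\int_a^{(a+b)/2}g(x)\,dx$ and hence to collapse $M(t)$ to $2\int_t^{1/2}g\big(sa+(1-s)b\big)\,ds$ on $[0,\tfrac12]$ (and the negative of the analogous expression on $[\tfrac12,1]$); (ii) follows from the substitution $x\mapsto a+b-x$ in the definition of $M$; (iii) is read off from (i); and (iv) needs the decomposition $\int_0^1|M(t)|\,dt=\int_0^{1/2}M(t)\,dt-\int_{1/2}^1 M(t)\,dt$ together with a sup bound for the first estimate and H\"older's inequality for the second. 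None of these steps appears in your proposal, so as a proof of the lemma as stated it is incomplete, even though the one part you do treat is handled correctly.
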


\begin{proof} \rm{(i)} Using the change of variable $x=sa+(1-s)b$ in the definition of $M(t)$, for $0\leq t\leq {1\over 2}$ we get
\begin{align*}
&M(t)=\frac{1}{b-a}\bigg[\int_a^{ta+(1-t)b} g(x)dx-\int_{ta+(1-t)b}^b g(x)dx\bigg],
\end{align*}
where $\frac{a+b}{2}\leq ta+(1-t)b\leq b$.\\ Since $g$ is symmetric to $\frac{a+b}{2}$, $$\displaystyle\int_{{a+b}\over 2}^b g(x)dx=\int_{a}^{{a+b}\over 2} g(x)dx,$$
and so
\begin{align}\label{eq.03}
&\int_a^{ta+(1-t)b} g(x)dx=\int_a^{{a+b}\over 2} g(x)dx+\int_{{a+b}\over 2}^{ta+(1-t)b} g(x)dx\\
&=\int_{{a+b}\over 2}^b g(x)dx+\int_{{a+b}\over 2}^{ta+(1-t)b} g(x)dx.\notag
\end{align}
On the other hand
\begin{align}\label{eq.04}
\int_{{a+b}\over 2}^b g(x)dx=\int_{{a+b}\over 2}^{ta+(1-t)b} g(x)dx+\int_{ta+(1-t)b}^b g(x)dx.
\end{align}
Therefore applying (\ref{eq.03}) and (\ref{eq.04}) together imply that
\begin{align*}
&\frac{1}{b-a}\bigg[\int_a^{ta+(1-t)b}  g(x)dx-\int_{ta+(1-t)b} ^b g(x)dx\bigg]=\frac{2}{b-a}\int_{{a+b}\over 2}^{ta+(1-t)b} g(x)dx.\\
\end{align*}
Hence $$M(t)=2\displaystyle\int_t^{\frac{1}{2}}g\big(sa+(1-s)b\big)ds,$$
where $0\leq t\leq \frac{1}{2}$.\\
 With the same argument as above we can prove that
$$M(t)=-2\displaystyle\int_{1\over 2}^{t} g\big(sa+(1-s)b\big)ds,$$
where $\frac{1}{2}\leq t\leq 1$.\\

\rm{(ii)} For any $t\in [0,1]$,
\begin{align*}
&M(t)=\frac{1}{b-a}\bigg[\displaystyle\int_{a}^{ta+(1-t)b}g(x)dx-\displaystyle\int_{ta+(1-t)b}^b g(x)dx\bigg]=\\
&\frac{1}{b-a}\bigg[\displaystyle\int_{a}^{ta+(1-t)b}g(a+b-x)dx-\displaystyle\int_{ta+(1-t)b}^b g(a+b-x)dx\bigg ]=\\
&\frac{1}{b-a}\bigg[\displaystyle\int_{(1-t)a+tb}^bg(x)dx-\displaystyle\int_a^{(1-t)a+tb} g(x)dx\bigg ]=-M(1-t).\\
\end{align*}

\rm{(iii)} It is easy consequence of assertion $(i)$.\\

\rm{(iv)} By the use of assertion (iii) We can obtain the following relations:
\begin{align*}
&\int_{0}^1|M(t)|dt=\int_{0}^{\frac{1}{2}}M(t)dt-\int_{\frac{1}{2}}^1M(t)dt=2\int_0^{\frac{1}{2}}\int_t^{1\over 2}g\big(sa+(1-s)b\big)dsdt\\
&+2\int_{\frac{1}{2}}^1\int_{1\over 2}^tg\big(sa+(1-s)b\big)dsdt\leq 2\int_0^{\frac{1}{2}}\int_t^{1\over 2}\sup_{s\in[t,\frac{1}{2}]}g\big(sa+(1-s)b\big)dsdt\\
&+ 2\int_{\frac{1}{2}}^1\int_{1\over 2}^t\sup_{s\in[\frac{1}{2},t]}g\big(sa+(1-s)b\big)dsdt\leq
2||g||_{\infty}\int_0^1\big|t-{1 \over 2}\big|dt=\frac{1}{2}||g||_{\infty},
\end{align*}
which proves the first part (iv).\\
For the second part of (iv), we consider the following assertion which is not hard to prove:
\begin{align}\label{eq05}
\int_{0}^1|M(t)|dt=2\int_0^1\bigg|\int_t^{1\over 2}g\big(sa+(1-s)b\big)ds\bigg|dt.
\end{align}
Also using H\"{o}lder's inequality we have
\begin{align}\label{eq06}
&\bigg|\int_t^{1\over 2}g\big(sa+(1-s)b\big)ds\bigg|\leq \bigg|\int_t^{1\over 2}ds\bigg|^{\frac{1}{p}}\bigg(\int_t^{1\over 2}\big|g\big(sa+(1-s)b\big)\big|^q ds\bigg)^{1\over q}\\
&\leq ||g||_{q}\big|t-{1 \over 2}\big|^{1\over p}\notag.
\end{align}
Now applying (\ref{eq06}) in (\ref{eq05}) we get
\begin{align*}
\int_{0}^1|M(t)|dt\leq 2||g||_{q}\int_0^1\big|t-{1 \over 2}\big|^{1\over p}dt.
\end{align*}

\rm{(v)} This identity has been obtained in \cite{sarik}.
\end{proof}

\noindent We can find more results related to the mapping $M(t)$, where the derivative of considered function is bounded or satisfies a Lipschitz condition.
\begin{remark} Suppose that $f:I\to \mathbb{R}$ is a differentiable mapping on $I^\circ, ~a,b\in I^\circ$ with $a<b$ and $g:[a,b]\to\mathbb{R}^+$ is a differentiable mapping . Assume that $f'$ is integrable on $[a,b]$ and there exist constants $m<M$ such that $$-\infty<m\leq f'(x)\leq M<\infty~~~~for ~all~x\in [a,b] .$$
Then \big(see \cite{rodr}\big)
\begin{align}\label{ineq001}
&\bigg{|}\frac{f(a)+f(b)}{2(b-a)}\int_a^bg(x)dx-\frac{1}{b-a}\int_a^b f(x)g(x)dx-\frac{m+M}{4}\int_0^1M(t)dt\bigg{|}\\
&\leq\frac{(M-m)(b-a)}{4}\int_0^1|M(t)|dt.\notag
\end{align}
If in inequality (\ref{ineq001}) we assume that $g$ is symmetric to $\frac{a+b}{2}$, then from assertion (iv) of Lemma \ref{lem0} we have
\begin{align*}
&\bigg{|}\frac{f(a)+f(b)}{2(b-a)}\int_a^bg(x)dx-\frac{1}{b-a}\int_a^b f(x)g(x)dx-\frac{m+M}{4}\int_0^1M(t)dt\bigg{|}\notag\\
&\leq\frac{(M-m)(b-a)}{8}||g||_{\infty}.
\end{align*}
Also by the use of H\"older's inequality the following relation holds:
\begin{align*}
&\bigg{|}\frac{f(a)+f(b)}{2(b-a)}\int_a^bg(x)dx-\frac{1}{b-a}\int_a^b f(x)g(x)dx-\frac{m+M}{4}\int_0^1M(t)dt\bigg{|}\\
&\leq\frac{(M-m)(b-a)}{2}||g||_{q}\int_0^1\big|t-{1 \over 2}\big|^{1\over p}dt.
\end{align*}
\end{remark}

\begin{definition}
\cite{robert} A function $f:[a,b]\rightarrow \mathbb{R}$ is said to satisfy
Lipschitz condition on $[a,b]$ if there is a constant $K$ so that for any
two points $x,y\in \lbrack a,b]$, $$\left\vert f(x)-f(y)\right\vert \leq
K|x-y|.$$
\end{definition}

\begin{remark}
Suppose that $f:I\to \mathbb{R}$ is a differentiable mapping on $I^\circ, ~a,b\in I^\circ$ with $a<b$ and $g:[a,b]\to\mathbb{R}^+$ is a differentiable mapping. Assume that $f'$ is integrable on $[a,b]$ and satisfies a Lipschitz condition for some $K>0$. Then \big(see \cite{rodr}\big)
\begin{align}\label{ineq2}
&\bigg{|}\frac{f(a)+f(b)}{2(b-a)}\int_a^bg(x)dx-\frac{1}{b-a}\int_a^b f(x)g(x)dx-\frac{1}{2}f'\Big(\frac{a+b}{2}\Big)\int_0^1M(t)dt\bigg{|}\\
&\leq K\frac{(b-a)}{2}\int_0^1|t-{1\over 2}||M(t)|dt.\notag
\end{align}

 In inequality (\ref{ineq2}) if we assume that $g$ is symmetric to $\frac{a+b}{2}$, then from assertion (i) of Lemma \ref{lem0} we get
\begin{align*}
&\bigg|\frac{f(a)+f(b)}{2(b-a)}\int_a^bg(x)dx-\frac{1}{b-a}\int_a^b f(x)g(x)dx-\frac{(b-a)}{2}f'\Big(\frac{a+b}{2}\Big)\int_{0}^1M(t)dt\bigg|\\
&\leq K(b-a)\int_0^1\int_t^{1\over 2} \big|t-{1\over 2}\big|\big|g\big(sa+(1-s)b\big)\big|dsdt.
\end{align*}
Also we have
\begin{align*}
&\bigg|\frac{f(a)+f(b)}{2(b-a)}\int_a^bg(x)dx-\frac{1}{b-a}\int_a^b f(x)g(x)dx-\frac{(b-a)}{2}f'\Big(\frac{a+b}{2}\Big)\int_{0}^1M(t)dt\bigg|\\
&\leq K(b-a)||g||_{\infty}\int_0^1 (t-{1\over 2})^2dt=\frac{K(b-a)}{12}||g||_{\infty}.
\end{align*}
\end{remark}
The following is the main result of the paper.
\begin{theorem}\label{thm0}  Suppose that $f:I\to \mathbb{R}$ is a differentiable mapping on $I^\circ,~a,b\in I^\circ$ with $a<b$ and $g:[a,b]\to\mathbb{R}^+$ is a differentiable mapping symmetric to $\frac{a+b}{2}$. If $|f'|$ is a h-convex mapping on $[a,b]$, Then
\begin{align}\label{ineq1}
&\bigg{|}\frac{f(a)+f(b)}{2}\int_a^bg(x)dx-\int_a^b f(x)g(x)dx\bigg{|}\leq \\
&(b-a)\big(|f'(a)|+|f'(b)|\big)\int_a^{\frac{a+b}{2}}\int_0^{\frac{x-a}{b-a}}g(x)\big[h(t)+h(1-t)\big]dtdx.\notag
\end{align}
\end{theorem}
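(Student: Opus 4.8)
The plan is to use the integral identity (\ref{eq.01'}) of Lemma \ref{lem0}(v) as the backbone. Multiplying both sides by $(b-a)$ gives
\[
\frac{f(a)+f(b)}{2}\int_a^bg(x)dx-\int_a^b f(x)g(x)dx=\frac{(b-a)^2}{2}\int_{0}^1M(t)f'\big(ta+(1-t)b\big)dt,
\]
and taking absolute values and passing the modulus inside the integral yields
\[
\bigg|\frac{f(a)+f(b)}{2}\int_a^bg(x)dx-\int_a^b f(x)g(x)dx\bigg|\leq\frac{(b-a)^2}{2}\int_{0}^1|M(t)|\,\big|f'\big(ta+(1-t)b\big)\big|dt.
\]
Into this I would insert the $h$-convexity of $|f'|$, which gives $|f'(ta+(1-t)b)|\leq h(t)|f'(a)|+h(1-t)|f'(b)|$, so the problem reduces to estimating $\int_0^1|M(t)|\big[h(t)|f'(a)|+h(1-t)|f'(b)|\big]dt$.

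Next I would exploit the antisymmetry relation (\ref{eq.01}) of Lemma \ref{lem0}(ii), which forces $|M(t)|=|M(1-t)|$. Substituting $t\mapsto 1-t$ in the term carrying $h(1-t)$ shows that $\int_0^1|M(t)|h(1-t)dt=\int_0^1|M(t)|h(t)dt$, so the two pieces collapse and the bound becomes exactly
\[
\big(|f'(a)|+|f'(b)|\big)\int_0^1|M(t)|h(t)dt.
\]
Everything is thereby reduced to identifying $\int_0^1|M(t)|h(t)dt$ with the double integral appearing in the statement, up to the constant $\tfrac{(b-a)^2}{2}$.

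This last identification is the computational heart of the argument and the step I expect to be the main obstacle. Using parts (i) and (iii) of Lemma \ref{lem0} I would write $|M(t)|$ explicitly on each half of $[0,1]$, splitting $\int_0^1|M(t)|h(t)dt$ into a piece over $[0,\tfrac12]$ and a piece over $[\tfrac12,1]$. In the second piece I would substitute $t\mapsto 1-t$ and use the symmetry of $g$ about $\frac{a+b}{2}$ (in the equivalent form $g\big((1-s)a+sb\big)=g\big(sa+(1-s)b\big)$) to fold it onto $[0,\tfrac12]$; the two halves then combine into $2\int_0^{1/2}\big[h(t)+h(1-t)\big]\int_t^{1/2}g\big(sa+(1-s)b\big)ds\,dt$. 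Finally I would apply Fubini to interchange the order of the $s$- and $t$-integrations over the triangle $\{0\le t\le s\le \tfrac12\}$ and perform the change of variable $x=sa+(1-s)b$, which after one further use of the symmetry of $g$ (via $x\mapsto a+b-x$) converts the limits into $\int_a^{(a+b)/2}$ with inner limit $\frac{x-a}{b-a}$. Keeping the substitutions, the Jacobian factor $\tfrac{1}{b-a}$, and the domain bookkeeping consistent is the delicate part; once this is done the constants combine, since $\frac{(b-a)^2}{2}\cdot\frac{2}{b-a}=(b-a)$, and $\frac{(b-a)^2}{2}\int_0^1|M(t)|h(t)dt$ equals the claimed expression, completing the proof.
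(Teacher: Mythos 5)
Your proposal is correct and follows essentially the same route as the paper: the identity of Lemma \ref{lem0}(v), the explicit form and sign of $M(t)$ from parts (i) and (iii), $h$-convexity of $|f'|$, Fubini over the triangle, the substitution $x=sa+(1-s)b$, and the symmetry of $g$. The only cosmetic difference is that you symmetrize early via $|M(t)|=|M(1-t)|$ to collapse the $h(t)|f'(a)|+h(1-t)|f'(b)|$ terms into $(|f'(a)|+|f'(b)|)\int_0^1|M(t)|h(t)\,dt$, whereas the paper carries both terms through the computation and only merges them at the end using $\int_0^{\frac{x-a}{b-a}}h(1-t)\,dt=\int_{\frac{b-x}{b-a}}^{1}h(t)\,dt$; the constants check out either way.
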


\begin{proof} From the definition of $M(t)$, assertions of Lemma \ref{lem0} and $h$-convexity of $|f'|$ we have
\begin{align*}
&\bigg{|}\frac{f(a)+f(b)}{2}\int_a^bg(x)dx-\int_a^b f(x)g(x)dx\bigg{|}=\frac{(b-a)^2}{2}\Bigg|\int_0^1 M(t)f'\big(ta+(1-t)b\big)dt\Bigg|\\
&\leq\frac{(b-a)^2}{2}\Bigg\{\int_0^{1\over 2}\big|M(t)\big| \big|f'\big|\big(ta+(1-t)b\big)dt+\int_{1\over 2}^1\big|M(t)\big| \big|f'\big|\big(ta+(1-t)b\big)dt\Bigg\}\\
&=\frac{(b-a)^2}{2}\Bigg\{\int_0^{1\over 2}M(t) \big|f'\big|\big(ta+(1-t)b\big)dt-\int_{1\over 2}^1M(t) \big|f'\big|\big(ta+(1-t)b\big)dt\Bigg\}\\
&\leq\frac{(b-a)^2}{2}\Bigg\{2\int_0^{1\over 2}\int_t^{1\over 2} g\big(sa+(1-s)b\big)\big(h(t)|f'(a)|+h(1-t)|f'(b)|\big)dsdt\\
&+2\int_{1\over 2}^1\int_{1\over 2}^{t}g\big(sa+(1-s)b\big)\big(h(t)|f'(a)|+h(1-t)|f'(b)|\big)dsdt\Bigg\}.
\end{align*}
Now if we change the order of integration, then
\begin{align*}
&\bigg{|}\frac{f(a)+f(b)}{2}\int_a^bg(x)dx-\int_a^b f(x)g(x)dx\bigg{|}\\
&\leq{(b-a)^2}\Bigg\{\int_0^{1\over 2}\int_0^s g\big(sa+(1-s)b\big)\big(h(t)|f'(a)|+h(1-t)|f'(b)|\big)dtds\\
&+\int_{1\over 2}^{1}\int_s^{1} g\big(sa+(1-s)b\big)\big(h(t)|f'(a)|+h(1-t)|f'(b)|\big)dtds\Bigg\}.
\end{align*}
Using the change of variable $x=sa+(1-s)b$ we get
\begin{align}\label{ineq01'}
&\bigg{|}\frac{f(a)+f(b)}{2}\int_a^bg(x)dx-\int_a^b f(x)g(x)dx\bigg{|}\\
&\leq{(b-a)}\Bigg\{\int_{\frac{a+b}{2}}^{b}\int_0^{\frac{b-x}{b-a}} g(x)\big(h(t)|f'(a)|+h(1-t)|f'(b)|\big)dtdx\notag\\
&+\int_a^{{a+b}\over 2}\int_{\frac{b-x}{b-a}}^{1} g(x)\big(h(t)|f'(a)|+h(1-t)|f'(b)|\big)dtdx\Bigg\}\notag.
\end{align}
Since the function $g$ is symmetric to $\frac{a+b}{2}$, then
\begin{align}\label{ineq01''}
&\int_{\frac{a+b}{2}}^{b}\int_0^{\frac{b-x}{b-a}} g(x)\big(h(t)|f'(a)|+h(1-t)|f'(b)|\big)dtdx\\
&=\int_a^{{a+b}\over 2}\int_0^{\frac{x-a}{b-a}} g(x)\big(h(t)|f'(a)|+h(1-t)|f'(b)|\big)dtdx\notag.
\end{align}
Also it is not hard to see that
\begin{align}\label{ineq01'''}
\int_0^{\frac{x-a}{b-a}} h(1-t)=\int_{\frac{b-x}{b-a}}^1h(t)dt.
\end{align}
Replacing (\ref{ineq01''}) and (\ref{ineq01'''}) in (\ref{ineq01'}) implies that

\begin{align*}
&\bigg{|}\frac{f(a)+f(b)}{2}\int_a^bg(x)dx-\int_a^b f(x)g(x)dx\bigg{|}\\
&\leq{(b-a)}\big(|f'(a)|+|f'(b)|\big)\Bigg\{\int_a^{{a+b}\over 2}g(x)\bigg[\int_0^{\frac{x-a}{b-a}}h(t)dt+\int_{\frac{b-x}{b-a}}^{1}h(t)dt\bigg]dx\Bigg\}\notag\\
&={(b-a)}\big(|f'(a)|+|f'(b)|\big)\int_a^{{a+b}\over 2}\int_0^{\frac{x-a}{b-a}}g(x)[h(t)+h(1-t)]dtdx.\notag
\end{align*}
\end{proof}

\begin{remark} We can obtain another form of (\ref{eq.01'}) in Lemma \ref{lem0}, by the use of (\ref{eq.01}). In fact we get
\begin{align}\label{ineq02'}
&\frac{1}{b-a}\bigg{(}\frac{f(a)+f(b)}{2}\int_a^bg(x)dx-\int_a^b f(x)g(x)dx\bigg{)}\\
&=\frac{b-a}{2}\int_{0}^1M(1-t)f'\big(tb+(1-t)a\big)dt\notag.
\end{align}
Now using (\ref{ineq02'}) in the proof of Theorem \ref{thm0}, implies another form of (\ref{ineq1}).
\begin{align}\label{eq.07}
&\bigg{|}\frac{f(a)+f(b)}{2}\int_a^bg(x)dx-\int_a^b f(x)g(x)dx\bigg{|}\leq \\
&(b-a)\big(|f'(a)|+|f'(b)|\big)\int_{{a+b}\over 2}^b\int_0^{\frac{b-x}{b-a}}g(x)[h(t)+h(1-t)]dtdx.\notag
\end{align}
\end{remark}

\begin{corollary} With the assumptions of Theorem \ref{thm0}, if the function $|f'|$ is s-convex on $[a,b]$, then
\begin{align}\label{ineq03'}
&\bigg{|}\frac{f(a)+f(b)}{2}\int_a^bg(x)dx-\int_a^b f(x)g(x)dx\bigg{|}\\
&\leq \frac{(b-a)}{1+s}\big(|f'(a)|+|f'(b)|\big)\int_a^{\frac{a+b}{2}} g(x)\bigg[\Big(\frac{x-a}{b-a}\Big)^{1+s}-\Big(\frac{b-x}{b-a}\Big)^{1+s}+1\bigg]dx\notag.
\end{align}
\end{corollary}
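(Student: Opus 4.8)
The plan is to specialize Theorem \ref{thm0} to the weight $h(t)=t^s$, since an $s$-convex function (in the second sense) is precisely an $h$-convex function with this choice of $h$, as noted immediately after Definition \ref{var}. Thus the corollary should follow directly by substituting $h(t)+h(1-t)=t^s+(1-t)^s$ into the right-hand side of inequality (\ref{ineq1}) and evaluating the resulting elementary inner integral in the variable $t$.

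First I would insert $h(t)+h(1-t)=t^s+(1-t)^s$ into the inner integral appearing in (\ref{ineq1}), namely $\int_0^{\frac{x-a}{b-a}}\big[h(t)+h(1-t)\big]\,dt$. Writing $u=\frac{x-a}{b-a}$ for brevity, this splits as $\int_0^u t^s\,dt+\int_0^u (1-t)^s\,dt$. Both antiderivatives are standard: $\int_0^u t^s\,dt=\frac{u^{1+s}}{1+s}$, while the substitution $v=1-t$ gives $\int_0^u (1-t)^s\,dt=\frac{1-(1-u)^{1+s}}{1+s}$. Adding them yields $\frac{1}{1+s}\big[u^{1+s}-(1-u)^{1+s}+1\big]$.

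Next I would undo the substitution using $u=\frac{x-a}{b-a}$ together with the identity $1-u=\frac{b-x}{b-a}$, which converts the inner integral into $\frac{1}{1+s}\Big[\big(\frac{x-a}{b-a}\big)^{1+s}-\big(\frac{b-x}{b-a}\big)^{1+s}+1\Big]$. Finally, substituting this expression back into (\ref{ineq1}) and pulling the constant factor $\frac{1}{1+s}$ out of the outer integral over $x$ produces exactly (\ref{ineq03'}).

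Since every step is either a direct specialization of the already-established Theorem \ref{thm0} or a one-variable integration of a power function, I do not anticipate any genuine obstacle. The only point demanding care is the bookkeeping of the exponents and the correct use of the identity $1-\frac{x-a}{b-a}=\frac{b-x}{b-a}$ when reverting the substitution, so that the sign pattern in the bracket of (\ref{ineq03'}) comes out precisely right.
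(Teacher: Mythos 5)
Your proposal is correct and is exactly the intended derivation: the paper states this corollary without proof, and the evident route (also the one the authors use explicitly in their Special Mean application with $h(t)=t^k$) is to set $h(t)=t^s$ in Theorem \ref{thm0} and evaluate $\int_0^{u}\big[t^s+(1-t)^s\big]dt=\frac{1}{1+s}\big[u^{1+s}-(1-u)^{1+s}+1\big]$ with $u=\frac{x-a}{b-a}$. Your bookkeeping of the exponents and of the identity $1-u=\frac{b-x}{b-a}$ matches the stated right-hand side of (\ref{ineq03'}).
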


\begin{corollary}
With the assumptions of Theorem \ref{thm0}, if the function $|f'|$ is convex on $[a,b]$, then
\begin{align}\label{ineq04'}
&\bigg{|}\frac{f(a)+f(b)}{2}\int_a^bg(x)dx-\int_a^b f(x)g(x)dx\bigg{|}\\
&\leq \big(|f'(a)|+|f'(b)|\big)\int_a^{\frac{a+b}{2}} g(x)(x-a)dx.\notag~~~\qquad 
\end{align}
Equivalently
\begin{align}\label{ineq05'}
&\bigg{|}\frac{f(a)+f(b)}{2}\int_a^bg(x)dx-\int_a^b f(x)g(x)dx\bigg{|}\\
&\leq \big(|f'(a)|+|f'(b)|\big)\int_{\frac{a+b}{2}}^b g(x)(b-x)dx.\notag~~~\qquad 
\end{align}
Also if we consider $g\equiv 1$, then we recapture the following result obtained in \cite{dragomir}.

\begin{align*}
\bigg| \frac{f ( a )+f ( b )}{2}-\frac{1}{b-a}\int_a^b f ( x ) dx\bigg| \leq \frac{(b - a) \big(|f'(a)| + |f'(b)|\big)}{8}.
\end{align*}
\end{corollary}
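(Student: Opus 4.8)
The plan is to specialize the main theorem to the choice $h(t)=t$, since a nonnegative function is convex precisely when it is $h$-convex for this particular $h$. First I would substitute $h(t)=t$ into the bound (\ref{ineq1}) of Theorem \ref{thm0}. The decisive simplification is that the weight in the inner integral collapses to a constant, namely $h(t)+h(1-t)=t+(1-t)=1$, so that $\int_0^{(x-a)/(b-a)}[h(t)+h(1-t)]\,dt=\frac{x-a}{b-a}$. Pulling the factor $g(x)$ through the $t$-integration and multiplying by the prefactor $(b-a)$, this $(b-a)$ cancels against the denominator, leaving exactly
$$\big(|f'(a)|+|f'(b)|\big)\int_a^{(a+b)/2} g(x)(x-a)\,dx,$$
which is the first asserted inequality (\ref{ineq04'}).

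To obtain the equivalent form (\ref{ineq05'}), I would carry out the same specialization, but starting from the alternative bound (\ref{eq.07}) derived in the preceding remark; with $h(t)=t$ the inner integral over $[0,(b-x)/(b-a)]$ likewise reduces to $\frac{b-x}{b-a}$, producing $\big(|f'(a)|+|f'(b)|\big)\int_{(a+b)/2}^b g(x)(b-x)\,dx$. Alternatively, one checks directly that the two right-hand sides coincide because $g$ is symmetric about $\frac{a+b}{2}$: the substitution $x\mapsto a+b-x$ carries $\int_a^{(a+b)/2} g(x)(x-a)\,dx$ into $\int_{(a+b)/2}^b g(x)(b-x)\,dx$, using $g(a+b-x)=g(x)$ together with $(a+b-x)-a=b-x$.

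Finally, to recover the known estimate for $g\equiv 1$, I would set $g\equiv 1$ in (\ref{ineq04'}) and evaluate the two elementary integrals: $\int_a^b dx=b-a$ on the left-hand term $\frac{f(a)+f(b)}{2}\int_a^b g(x)\,dx$, and $\int_a^{(a+b)/2}(x-a)\,dx=\frac{(b-a)^2}{8}$ on the right. Dividing both sides by $b-a$ then yields the stated bound with constant $\frac{1}{8}$. There is no genuine obstacle in this argument; the only point demanding care is the bookkeeping of the cancelling $(b-a)$ factors and the observation that the bracket $h(t)+h(1-t)$ is identically $1$ in the convex case, after which everything reduces to two direct integrations.
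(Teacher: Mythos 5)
Your proposal is correct and follows exactly the route the paper intends: specializing $h(t)=t$ in Theorem \ref{thm0} so that $h(t)+h(1-t)\equiv 1$, obtaining (\ref{ineq05'}) via the remark's variant bound (\ref{eq.07}) or the symmetry of $g$, and evaluating $\int_a^{(a+b)/2}(x-a)\,dx=\frac{(b-a)^2}{8}$ for the case $g\equiv 1$. All computations check out.
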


\begin{remark} Inequalities (\ref{ineq1}), (\ref{eq.07}), (\ref{ineq03'}), (\ref{ineq04'}) and (\ref{ineq05'}) are new type in the literature for the class of Feje\'r trapezoidal inequality related to $h$-convex, $s$-convex and convex functions respectively.
\end{remark}
\bigskip
 \section{Application}
\subsection{Special Mean}
In the literature, the following means for real numbers $a, b\in \mathbb{R}$ are well known:
\begin{align*}
&A(a,b)=\frac{a+b}{2}\qquad\qquad\qquad\qquad\qquad ~~~~~~~~~~~~~~~~arithmetic ~mean,\\
&L_n(a,b)=\Big[\frac{b^{n+1}-a^{n+1}}{(n+1)(b-a)}\Big]^{\frac{1}{n}}\qquad\qquad~generalized ~log\!-\! mean, ~n\in \mathbb{R}, ~a<b.
\end{align*}

Consider
\begin{align*}
\begin{cases}
f(x)=x^n, &~~~~ x>0 ~and ~ n\in (-\infty,-1)\cup (-1,0)\cup [1,\infty);\\
h(t)=t^k, &~~~~k\leq 1 ~and~ k\neq -1,-2;\\
g(x)\equiv 1.
\end{cases}
\end{align*}
Theorem \ref{thm0} implies the following inequalities:
\begin{align*}
&\bigg|\frac{a^n+b^n}{2}(b-a)-\frac{1}{n+1}\big[b^{n+1}-a^{n+1}\big]\bigg|\leq n(b-a)\Big(|a|^{n-1}+|b|^{n-1}\Big)\\
&\times \int_{a}^{\frac{a+b}{2}}\int_0^{\frac{x-a}{b-a}}\big[t^k+(1-t)^k\big]dtdx=\frac{n(b-a)}{k+1}\big(|a|^{n-1}+|b|^{n-1}\big)\\
&\times\int_a^{\frac{a+b}{2}}\Big[\Big(\frac{x-a}{b-a}\Big)^{k+1}-\Big(\frac{b-x}{b-a}\Big)^{k+1}+1\Big]dx=\frac{n}{(k+1)(k+2)(b-a)^{k}}\big(|a|^{n-1}+|b|^{n-1}\big)\\
&\times \Big[(x-a)^{k+2}-(b-x)^{k+2}+(b-a)^{k+1}(k+2)x\Big]_a^{\frac{a+b}{2}}\\
&=\frac{n}{(k+1)(k+2)(b-a)^{k}}\big(|a|^{n-1}+|b|^{n-1}\big)(b-a)^{k+2}\Big[\frac{1}{2^{k+1}}+\frac{1}{2}k\Big]\\
&=\frac{n(b-a)^2}{(k+1)(k+2)}\big(|a|^{n-1}+|b|^{n-1}\big)\Big[\frac{1}{2^{k+1}}+\frac{1}{2}k\Big].
\end{align*}
Hence we get
\begin{align*}
&\bigg|\frac{a^n+b^n}{2}-\frac{b^{n+1}-a^{n+1}}{(n+1)(b-a)}\bigg|\leq \frac{n(b-a)}{(k+1)(k+2)}(|a|^{n-1}+|b|^{n-1})\Big[\frac{1}{2^{k+1}}+\frac{1}{2}k\Big],
\end{align*}
which implies that
\begin{align}\label{ineq3'}
&\bigg|A(a^n,b^n)-L_n^n(a,b)\bigg|\leq \frac{n(b-a)}{(k+1)(k+2)}L\big(|a|^{n-1},|b|^{n-1}\big)\Big[\frac{1}{2^{k}}+k\Big].
\end{align}

If we consider $k=1$ in (\ref{ineq3'}), then we recapture the following result.
\begin{corollary}[Proposition 3.1 in \cite{dragomir}]
Let $a, b \in\mathbb{R}$, $a < b$ and $n\in\mathbb{ N}$, $n \geq 2$. Then the following inequality holds:
\begin{align}
&\bigg|A(a^n,b^n)-L_n^n(a,b)\bigg|\leq \frac{n(b-a)}{4}A\big(|a|^{n-1},|b|^{n-1}\big).\notag
\end{align}
\end{corollary}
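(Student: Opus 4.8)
The plan is to obtain the corollary as the single parameter value $k=1$ of the special-means inequality (\ref{ineq3'}), which has already been established in the lines preceding it. First I would record the two mean identifications implicit in the derivation: $A(a^n,b^n)=\tfrac{a^n+b^n}{2}$ and, from the definition of the generalized log-mean, $L_n^n(a,b)=\tfrac{b^{n+1}-a^{n+1}}{(n+1)(b-a)}$. These show that the left-hand side $\big|A(a^n,b^n)-L_n^n(a,b)\big|$ of the corollary is literally the left-hand side of (\ref{ineq3'}), so no further work is needed on that side.

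Next I would evaluate the right-hand side of (\ref{ineq3'}) at $k=1$. The only thing to compute is the numerical factor $\tfrac{1}{(k+1)(k+2)}\big[\tfrac{1}{2^{k}}+k\big]$; at $k=1$ this is $\tfrac{1}{2\cdot 3}\big(\tfrac12+1\big)=\tfrac16\cdot\tfrac32=\tfrac14$. Hence the bound $\tfrac{n(b-a)}{(k+1)(k+2)}A(|a|^{n-1},|b|^{n-1})\big[\tfrac{1}{2^{k}}+k\big]$ collapses to $\tfrac{n(b-a)}{4}A(|a|^{n-1},|b|^{n-1})$, which is exactly the asserted inequality.

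Finally I would justify that the specialization is legitimate under the stated hypotheses. For $k=1$ the weight $h(t)=t^{k}$ reduces to $h(t)=t$, so the $h$-convexity assumption of Theorem \ref{thm0} underlying (\ref{ineq3'}) degenerates precisely to ordinary convexity of $|f'|$. With $f(x)=x^{n}$ one has $|f'(x)|=n|x|^{n-1}$, and since $|x|^{p}$ is convex exactly when $p\ge 1$, the condition $n-1\ge 1$, i.e. $n\ge 2$, is what makes $|f'|$ convex; this is the role of the hypothesis $n\in\mathbb{N}$, $n\ge 2$. I expect the only care-points to be this convexity threshold and the constant arithmetic, so those are the two places I would verify explicitly; everything else is inherited directly from (\ref{ineq3'}).
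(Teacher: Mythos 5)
Your proposal is correct and follows exactly the paper's route: the paper derives the corollary simply by setting $k=1$ in (\ref{ineq3'}), and your arithmetic $\tfrac{1}{(1+1)(1+2)}\big[\tfrac{1}{2}+1\big]=\tfrac14$ matches. Your added remarks — that the $L(|a|^{n-1},|b|^{n-1})$ appearing in (\ref{ineq3'}) is really the arithmetic mean $A(|a|^{n-1},|b|^{n-1})$ (as the preceding display with the factor $\tfrac{|a|^{n-1}+|b|^{n-1}}{2}$ shows), and that $n\ge 2$ is exactly what makes $|f'(x)|=n|x|^{n-1}$ convex — are correct clarifications of points the paper leaves implicit.
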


\subsection{Random Variable}
 Suppose that for $0<a<b$ and $g:[a,b]\to \mathbb{R^+}$ is a continuous probability density function i.e.
 $$\int_a^bg(x)dx=1,$$ which is symmetric to $\frac{a+b}{2}$. Also for $\lambda\in \mathbb{R}$, suppose that the $\lambda$-moment
$$E_{\lambda}(X)=\int_a^bx^{\lambda}g(x)dx,$$ is finite.\\
From (\ref{ineq1}) and the fact that for any $a\leq x\leq \frac{a+b}{2}$ we have $0\leq \frac{x-a}{b-a}\leq\frac{1}{2}$, the following inequality holds.
\begin{align}\label{ineq4}
&\bigg{|}\frac{f(a)+f(b)}{2}\int_a^b g(x)dx-\int_a^b f(x)g(x)dx\bigg{|}\leq {(b-a)}\big(|f'(a)|+|f'(b)|\big)\\
&\times\int_a^{{a+b}\over 2}\int_0^{\frac{1}{2}}g(x)[h(t)+h(1-t)]dtdx=\frac{(b-a)}{2}\big(|f'(a)|+|f'(b)|\big)\notag\\
&\times\int_0^{\frac{1}{2}}[h(t)+h(1-t)]dt,\notag
\end{align}
where from the fact that $g$ is symmetric and $\int_{a}^b g(x)dx=1$, we have $\int_{{a}}^{\frac{a+b}{2}} g(x)dx={1\over 2}.$

\begin{example}
If we consider
\begin{align*}
\begin{cases}
f(x)=\frac{1}{\lambda}x^{\lambda},  &  x>0, \lambda\in (-\infty,0)\cup(0,1]\cup[2,+\infty);\\
h(t)=t^k, &  k\in (-\infty,-1)\cup(-1,1];\\
g(x)\equiv 1.
\end{cases}
\end{align*}
Then $|f'|$ is $h$-convex (see  Example 7 in \cite{varosanec}) and so from (\ref{ineq4}) we have
\begin{align*}
&\bigg|\frac{a^{\lambda}+b^{\lambda}}{2\lambda}-E_{\lambda}(X)\bigg|\leq \frac{\lambda(b-a)}{2(k+1)}\Big(a^{\lambda-1}+b^{\lambda-1}\Big),
\end{align*}
since
\begin{align*}
&\bigg|\frac{a^{\lambda}+b^{\lambda}}{2\lambda}-E_{\lambda}(X)\bigg|\leq \frac{\lambda(b-a)}{2}\Big(a^{\lambda-1}+b^{\lambda-1}\Big)\int_0^\frac{1}{2}\big[t^k+(1-t)^k\big]dt\\
&= \frac{\lambda(b-a)}{2(k+1)}\Big(a^{\lambda-1}+b^{\lambda-1}\Big).
\end{align*}
\end{example}

If $\lambda = 1$, $E(X)$ is the expectation of the random variable $X$ and from above inequality we obtain the following bound
\begin{align*}
&\bigg|\frac{a+b}{2}-E(X)\bigg|\leq \frac{b-a}{k+1},
\end{align*}
and in the case that $k=1$, we recapture the following known bound
\begin{align*}
&\bigg|\frac{a+b}{2}-E(X)\bigg|\leq \frac{b-a}{2}.
\end{align*}

\subsection{Trapezoidal Formula}
Consider the partition (P) of interval $[a,b]$ as \\$a=x_0<x_1<x_2<...<x_n=b$. The quadrature formula is $$\int_a^bf(x)g(x)dx=T(f,g,P)+E(f,g,P), $$
where $$T(f,g,P)=\sum_{i=0}^{n-1}\frac{f(x_i)+f(x_{i+1})}{2}\int_{x_i}^{x_{i+1}}g(x)dx,$$
is the trapezoidal form and $E(f,g,P)$ is the associated approximation error.

For each $i\in\{0,1,...,n-1\}$ consider interval $[x_i,x_{i+1}]$ of partition (P) of interval $[a,b]$. Suppose that all conditions of Theorem \ref{thm0} are satisfied on $[x_i,x_{i+1}]$. Then
\begin{align}\label{ineq6}
&\bigg{|}\frac{f(x_i)+f(x_{i+1})}{2}\int_{x_i}^{x_{i+1}}g(x)dx-\int_{x_i}^{x_{i+1}} f(x)g(x)dx\bigg{|}\\
&\leq(x_{i+1}-x_i)\Big[|f'(x_i)|+|f'(x_{i+1})|\Big]\int_{\frac{x_i+x_{i+1}}{2}}^{x_{i+1}}\int_0^{\frac{x_{i+1}-x}{x_{i+1}-x_i}}g(x)[h(t)+h(1-t)]dtdx,\notag
\end{align}
For each $i\in\{0,1,...,n-1\}$.
Then using inequality (\ref{ineq6}), summing with respect to $i$ from $i=0$ to $i={n-1}$ and using triangle inequality we obtain
 \begin{align*}
&\bigg|T(f,g,P)-\int_a^bf(x)g(x)dx\bigg|\\
&=\Bigg|\sum_{i=0}^{n-1}\Big[\frac{f(x_i)+f(x_{i+1})}{2}\int_{x_i}^{x_{i+1}}g(x)dx-\int_{x_i}^{x_{i+1}} f(x)g(x)dx\Big]\Bigg|\notag\\
&\leq\sum_{i=0}^{n-1}\bigg{|}\frac{f(x_i)+f(x_{i+1})}{2}\int_{x_i}^{x_{i+1}}g(x)dx-\int_{x_i}^{x_{i+1}} f(x)g(x)dx\bigg{|}\notag
\\&\leq\sum_{i=0}^{n-1}(x_{i+1}-x_i)\Big[|f'(x_i)|+|f'(x_{i+1})|\Big]\int_{\frac{x_i+x_{i+1}}{2}}^{x_{i+1}}\int_0^{\frac{x_{i+1}-x}{x_{i+1}-x_i}}g(x)[h(t)+h(1-t)]dtdx.\notag
\end{align*}
So we get the error bound:
\begin{align}\label{ineq7}
&|E(f,g,P)|\leq\sum_{i=0}^{n-1}(x_{i+1}-x_i)\Big[|f'(x_i)|+|f'(x_{i+1})|\Big]\\
&\times\int_{\frac{x_i+x_{i+1}}{2}}^{x_{i+1}}\int_0^{\frac{x_{i+1}-x}{x_{i+1}-x_i}}g(x)[h(t)+h(1-t)]dtdx.\notag
\end{align}

\begin{corollary}
If we consider $h(t)=t^k$ in (\ref{ineq7}) then:
\begin{align}\label{ineq400}
&|E(f,g,P)|\leq\frac{1}{k+1}\sum_{i=0}^{n-1}(x_{i+1}-x_i)\Big[|f'(x_i)|+|f'(x_{i+1})|\Big]\\
&\times\int_{\frac{x_i+x_{i+1}}{2}}^{x_{i+1}}\bigg[\Big(\frac{x_{i+1}-x}{x_{i+1}-x_i}\Big)^{k+1}-\Big(\frac{x-x_i}{x_{i+1}-x_i}\Big)^{k+1}+1\bigg]g(x)dx.\notag
\end{align}
 If $k=1$ and $g\equiv 1$ in (\ref{ineq400}), then we recapture the inequality obtained in Proposition 4.1 in \cite{dragomir}:
\begin{align}
|E(f,P)|\leq\frac{1}{8}\sum_{i=0}^{n-1}\Big[|f'(x_i)|+|f'(x_{i+1})|\Big](x_{i+1}-x_i)^2.\notag
\end{align}
\end{corollary}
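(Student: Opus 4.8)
The plan is to substitute $h(t)=t^k$ directly into the inner $t$-integral of (\ref{ineq7}) and evaluate it in closed form. Fixing an index $i$ and writing $u=\frac{x_{i+1}-x}{x_{i+1}-x_i}$, the inner integral splits as $\int_0^u t^k\,dt+\int_0^u(1-t)^k\,dt$. The first term equals $\frac{u^{k+1}}{k+1}$ by the power rule; for the second, the substitution $w=1-t$ yields $\frac{1-(1-u)^{k+1}}{k+1}$. Here the admissibility hypothesis on $k$ matters: the restriction $k\neq-1$ is exactly what keeps the prefactor $\frac{1}{k+1}$ finite, while the remaining conditions on $k$ guarantee that $h(t)=t^k$ belongs to the class of Definition \ref{var} so that Theorem \ref{thm0} applies on each subinterval. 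Adding the two pieces collapses the inner integral to $\frac{1}{k+1}\big[u^{k+1}-(1-u)^{k+1}+1\big]$.

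The second step is the cosmetic but essential identity $1-u=\frac{x-x_i}{x_{i+1}-x_i}$, which follows since $u+\frac{x-x_i}{x_{i+1}-x_i}=\frac{(x_{i+1}-x)+(x-x_i)}{x_{i+1}-x_i}=1$. Substituting this in and feeding the result back into (\ref{ineq7}) reproduces verbatim the bracketed integrand of (\ref{ineq400}), which establishes the first assertion. I expect no genuine obstacle in this portion; the only care required is to keep track of the endpoints of the inner integral so that the exponent $k+1$ lands on the correct factors and the sign in front of $(1-u)^{k+1}$ comes out negative.

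For the special case I would set $k=1$ and $g\equiv 1$, so that $\frac{1}{k+1}=\frac12$ and the $x$-integral reduces to $\int_{\frac{x_i+x_{i+1}}{2}}^{x_{i+1}}\big[(\tfrac{x_{i+1}-x}{x_{i+1}-x_i})^2-(\tfrac{x-x_i}{x_{i+1}-x_i})^2+1\big]\,dx$. The natural move is the affine change of variable $v=\frac{x-x_i}{x_{i+1}-x_i}$, which maps $\big[\frac{x_i+x_{i+1}}{2},x_{i+1}\big]$ onto $[\tfrac12,1]$, turns the integrand into $(1-v)^2-v^2+1$, and supplies $dx=(x_{i+1}-x_i)\,dv$. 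A direct evaluation then gives $\int_{1/2}^1\big[(1-v)^2-v^2+1\big]\,dv=\frac{1}{24}-\frac{7}{24}+\frac12=\frac14$, so the $x$-integral equals $\frac{x_{i+1}-x_i}{4}$. Combining this with the outer factor $\frac12(x_{i+1}-x_i)$ produces the coefficient $\frac18(x_{i+1}-x_i)^2$ on each summand, which is exactly the claimed bound. The entire argument is elementary computation, so the only part warranting caution is the evaluation of $\int_{1/2}^1\big[(1-v)^2-v^2+1\big]\,dv$, which I would verify term by term.
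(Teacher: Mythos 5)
Your computation is correct and is exactly the routine substitution the paper intends (the corollary is stated without proof there): the inner integral evaluates to $\frac{1}{k+1}\bigl[u^{k+1}-(1-u)^{k+1}+1\bigr]$ with $u=\frac{x_{i+1}-x}{x_{i+1}-x_i}$, and the $k=1$, $g\equiv 1$ case reduces to $\int_{1/2}^{1}(2-2v)\,dv=\frac14$, giving the factor $\frac18(x_{i+1}-x_i)^2$. No issues.
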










\end{document}